\DeclareMathOperator{\GL}{GL}
\DeclareMathOperator{\tr}{tr}
\DeclareMathOperator{\ch}{ch}
\newcommand{\PP}{{\mathbb P}}
\newcommand{\EE}{{\mathbb E}}
\newcommand{\Z}{\mathbb{Z}}
\newcommand{\R}{\mathbb{R}}
\newcommand{\C}{\mathbb{C}}
\theoremstyle{plain}
\newtheorem{theorem}{Theorem}
\newtheorem*{theorem*}{Theorem}
\newtheorem{lemma}{Lemma}
\theoremstyle{definition}
\newtheorem{conjecture}{Conjecture}
\newtheorem*{conjecture*}{Conjecture}
\newtheorem{corollary}{Corollary}
\newtheorem*{rem*}{Remark}
\begin{document}

\title{The probability of long cycles in interchange processes}

\author{Gil Alon and Gady Kozma}

\begin{abstract}
We examine the number of cycles of length $k$ in a permutation, as a function on the symmetric group. We write it explicitly as a combination of characters of irreducible representations. This allows to study formation of long cycles in the interchange process, including a precise formula for the probability that the permutation is one long cycle at a given time $t$, and estimates for the cases of shorter cycles. 
\end{abstract}


\maketitle
\section{introduction}
A well known phenomenon in the theory of mixing times\footnote{We do
  not need the notion of mixing time in this paper, it is only used
  for comparison. The reader unfamiliar with it may peruse the
  survey \cite{MT06} or the book \cite{LPW09}.}  is that occasionally certain aspects of a system mix much
faster than the system as a whole. Pemantle \cite{P94} constructed an
example of a random walk on the symmetric group $S_n$ which mixes in
time $n^{1+o(1)}$ while every $k$ elements mix in $\le C(k)\sqrt{n}$
time. Schramm showed that for the interchange process on
the complete graph --- this is another random walk on $S_n$, see below
for details --- the structure of the large cycles mixes in time
$\approx n$, and it was known before \cite{DS81} that the mixing time
of this graph is $\approx n\log n$. See \cite{S05} and also \cite{B11}. Schramm's result is related to ---
in physics' parlance, it is the \emph{mean-field} case of --- a
conjecture of B\'alint T\'oth \cite{T93} that the cycle structure of
the interchange process on the graph $\Z^d$, $d\ge 3$, exhibits a
\emph{phase-transition}. In this paper we investigate the probability of long cycles, and obtain precise formulae for any graph, using the representation theory of $S_n$. As an application, we analyse certain
variations on T\'oth's conjecture.

Let us define the interchange process. Let $G$ be a finite graph with vertex set $\{1,\dotsc,n\}$, and equip each edge $\{i,j\}$ with an alarm clock
that rings with exponential rate $a_{i,j}$. Put a marble on every
vertex of $G$, all different, and whenever the clock of $\{i,j\}$ rings,
exchange the two marbles. Each marble therefore does a standard
continuous-time random walk on the graph but the different walks are
dependent. The positions of the marbles at time $t$ is a permutation
of their original positions, and viewed this way the process is a
random walk on the symmetric group. Note that we have changed the
timing from the previous paragraph. For example, if our graph is the
complete graph and $a_{i,j}=\nicefrac{1}{n}$ for all $i$ and $j$, then the process
mixes in time $\approx\log n$ and the large cycle structure
mixes in time $\approx 1$. However, the added convenience of
having each marble do the natural continuous time random walk
outweighs the difference in notations from some of the literature.

The stronger results of this paper require representation
theory to state, but let us start with two corollaries that can be
stated elementarily. Let $s_k(t)$ be the number of cycles of length
$k$ in our permutation at time $t$. Let
$0=\lambda_0\le\lambda_1\le\dotsb\le\lambda_{n-1}$ be the eigenvalues of
the continuous time Laplacian of the random walk on the graph $G$. Then
\begin{theorem}\label{thm:n}
We have $$ \PP(s_n(t)=1)=\frac1n\prod_{i=1}^{n-1}(1-e^{-\lambda_i t}) $$
\end{theorem}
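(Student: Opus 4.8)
The plan is to expand the indicator function of the set of $n$-cycles into irreducible characters of $S_n$, and then compute the time evolution of each character expectation from the representation-theoretic structure of the interchange generator. Write $\sigma_t$ for the permutation at time $t$ started from $\sigma_0=\mathrm{id}$, and let $C\subset S_n$ be the conjugacy class of $n$-cycles, so that $\PP(s_n(t)=1)=\EE[\mathbf 1_C(\sigma_t)]$. Since $\mathbf 1_C$ is a class function, column orthogonality gives $\mathbf 1_C=\frac{|C|}{n!}\sum_{\lambda\vdash n}\chi^\lambda(c)\,\chi^\lambda$ with $c$ an $n$-cycle. By the Murnaghan--Nakayama rule the value $\chi^\lambda(c)$ vanishes unless $\lambda$ is a hook $(n-k,1^k)$, in which case it equals $(-1)^k$; combined with $|C|/n!=1/n$ this yields $\mathbf 1_C=\frac1n\sum_{k=0}^{n-1}(-1)^k\chi^{(n-k,1^k)}$. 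Hence it suffices to compute $\EE[\chi^{(n-k,1^k)}(\sigma_t)]$ for each $k$.

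The second step is a semigroup computation. Let $A=\sum_{\{i,j\}}a_{i,j}\,(i\,j)$ in the group algebra and $T=\sum_{\{i,j\}}a_{i,j}$. For any representation $\rho$, conditioning on the first ring and differentiating gives the linear ODE $\frac{d}{dt}\EE[\rho(\sigma_t)]=(\rho(A)-T\,I)\,\EE[\rho(\sigma_t)]$ with $\EE[\rho(\sigma_0)]=I$, so $\EE[\rho(\sigma_t)]=\exp\!\big(t(\rho(A)-T\,I)\big)$. Taking $\rho$ to be the natural representation on $\mathbb C^n$, a direct computation identifies $\rho(A)-T\,I$ with $-L$, the negative graph Laplacian, so $\EE[\rho_{\mathrm{nat}}(\sigma_t)]=e^{-tL}$, whose eigenvalues are $e^{-t\lambda_0},\dots,e^{-t\lambda_{n-1}}$; removing the trivial summand, $\EE[\rho_{\mathrm{std}}(\sigma_t)]$ has eigenvalues $e^{-t\lambda_1},\dots,e^{-t\lambda_{n-1}}$.

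The heart of the argument is the identity $\chi^{(n-k,1^k)}=\tr\wedge^k\rho_{\mathrm{std}}$ together with the claim that expectation commutes with the exterior power, namely $\EE[\wedge^k\rho_{\mathrm{std}}(\sigma_t)]=\wedge^k\EE[\rho_{\mathrm{std}}(\sigma_t)]$. The subtle point is that the marbles are strongly coupled, so this is not a statement about independent particles; it must come from the fermionic sign. Concretely, I would compare the interchange generator $\sum_\tau a_\tau(\wedge^k P_\tau-I)$ on $\wedge^k\mathbb C^n$ with the independent-particle generator $-D^{(k)}(L)$, where $D^{(k)}$ is the derivation extension of a matrix to $\wedge^k$. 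These agree provided $\wedge^k P_\tau-D^{(k)}(P_\tau)=(1-k)I$ for every transposition $\tau=(i\,j)$, which I would verify by cases according to how many of $i,j$ lie in a basis index set $S$: when exactly one of $i,j$ lies in $S$ the off-diagonal contributions of $\wedge^k P_\tau$ and of $D^{(k)}(P_\tau)$ coincide and cancel, while when both lie in $S$ the sign $-1$ of $\wedge^k P_\tau$ is exactly what the derivation count predicts. This is the step I expect to be the main obstacle, since it is where the coupling of the process is tamed. Granting it, $\EE[\wedge^k\rho_{\mathrm{std}}(\sigma_t)]$ has eigenvalues $\prod_{i\in S}e^{-t\lambda_i}$ over $k$-subsets $S\subseteq\{1,\dots,n-1\}$, so $\EE[\chi^{(n-k,1^k)}(\sigma_t)]=e_k(e^{-t\lambda_1},\dots,e^{-t\lambda_{n-1}})$, the $k$-th elementary symmetric polynomial.

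Finally I would assemble the pieces: substituting into the character expansion and using $\sum_{k=0}^{n-1}(-1)^k e_k(x_1,\dots,x_{n-1})=\prod_{i=1}^{n-1}(1-x_i)$ gives $\PP(s_n(t)=1)=\frac1n\prod_{i=1}^{n-1}(1-e^{-t\lambda_i})$. As a sanity check, at $t=0$ the product vanishes (the identity is not an $n$-cycle for $n\ge 2$), while as $t\to\infty$ on a connected graph it tends to $1/n=(n-1)!/n!$, the equilibrium probability that a uniform permutation is an $n$-cycle.
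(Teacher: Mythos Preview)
Your argument is correct and follows essentially the same route as the paper's: expand the indicator of $n$-cycles into hook characters, compute each character expectation as a trace of the exponentiated Laplacian on the corresponding representation, identify the eigenvalues on the hook $[n-k,1^k]\cong\wedge^k\rho_{\mathrm{std}}$ as the $k$-element sums $\lambda_{j_1}+\cdots+\lambda_{j_k}$, and sum via $\sum_k(-1)^k e_k(x)=\prod_i(1-x_i)$. The paper obtains the hook decomposition as the $k=n$ case of its Theorem~\ref{thm:k} (noting in a remark that Murnaghan--Nakayama gives it directly, as you do), and it simply \emph{cites} the eigenvalue fact for hooks from Bacher \cite{B94} and the appendix of \cite{AK09}. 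Your contribution is a self-contained proof of that fact via the generator identity $\wedge^k P_\tau - D^{(k)}(P_\tau)=(1-k)I$ for transpositions $\tau$; this is indeed valid (it holds for any orthogonal reflection, as one checks in an eigenbasis where $P_\tau$ has eigenvalue $-1$ with multiplicity one), and it is exactly what underlies the cited references, so your ``main obstacle'' is a genuine lemma rather than a gap.
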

Let us demonstrate the utility of this formula on the graph
$G=\{0,1\}^d$ with weights equal to 1. There is nothing particular about
this graph, but existing literature allows for easy comparison. For
example, Wilson \cite[\S 9]{W04} showed that the mixing time of the
interchange process on $G$ is $\ge cd$ (see also \cite{M06,O10}). The eigenvalues of $G$ may be
calculated explicitly: the eigenvectors are the Walsh functions, indexed by $y\in\{0,1\}^d$ and given by $f_y(x)=(-1)^{\sum_{i=1}^d x_i y_i}$.
We get that $2k$ is an eigenvalue with multiplicity ${d \choose k}$ for
$k=0,\dotsc,d$. Inserting into the formula at times
$\frac{1\pm\epsilon}{2}\log d$ gives
\begin{align*}
\PP\Big(s_n\Big(\frac{1-\epsilon}{2}\log d\Big)=1\Big)&=
2^{-d}\prod_{k=1}^d\left(1-e^{-(1-\epsilon)k\log d}\right)^{d \choose k}\le
\intertext{and looking only at $k=K:=\lfloor d^\epsilon/2\rfloor$,}
& \le \exp\left(-d^{(\epsilon-1)K}{d\choose K}\right)\stackrel{(*)}{\le} 
\exp\left(-\Big(\frac{d^{\epsilon}}{K}\Big)^K\right)\le 
\exp\left(-\exp\left(cd^\epsilon\right)\right)
\end{align*}
where $(*)$ comes from
\[
{d \choose
  K}=\Big(\frac{d}{K}\Big)^K\cdot\Big(\frac{1-\nicefrac{1}{d}}{1-\nicefrac{1}{K}}\cdot\frac{1-\nicefrac{2}{d}}{1-\nicefrac{2}{K}}\cdot\dotsb\Big)\ge\Big(\frac{d}{K}\Big)^K.
\]
On the other hand,
\begin{align*}
\PP\Big(s_n\Big(\frac{1+\epsilon}{2}\log d\Big)=1\Big) &= 
2^{-d}\exp\bigg(\sum_{k=1}^dO(d^{-(1+\epsilon)k}){d\choose k}\bigg) =
2^{-d}(1+O(d^{-\epsilon})).
\end{align*}
We see that the probability equilibrates at $\frac{1}{2}\log d$,
before the mixing time of the whole chain. Further, the equilibration
happens sharply --- this is reminiscent of the cutoff phenomenon for
mixing times. See \cite{DS81}, \cite{LS} or \cite[\S 18]{LPW09} for
the cutoff phenomenon. 

We remark that taking $t\to 0$ in Theorem \ref{thm:n} one can get a
new proof of Kirchoff's matrix-tree theorem. We fill the details in the appendix.

Another general, elementarily stated result is:
\begin{theorem}\label{thm:chuk} We have, for any graph $G$ and any $1\leq k \leq n$,
$$ \left|\EE(s_k(t))-\frac 1k\right| \leq \frac{3^n}{k}e^{-t\lambda_1} $$
\end{theorem}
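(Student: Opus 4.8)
The plan is to expand $s_k$ into irreducible characters, evolve each character under the process as an explicit matrix exponential, and then control the resulting sum using the spectral gap of the process.

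\emph{Combinatorial expansion.} Since $s_k$ is a class function it is determined by its image under the characteristic map $\ch$. Writing $f_d(\sigma)=\#\{x:\sigma^d(x)=x\}$ for the number of fixed points of $\sigma^d$, one has $f_d=\sum_{\ell\mid d}\ell\,s_\ell$, so Möbius inversion gives $k\,s_k=\sum_{d\mid k}\mu(k/d)f_d$. A standard power-sum computation (using $\sum_{\alpha}z_\alpha^{-1}m_e(\alpha)p_\alpha=\tfrac1e p_e h_{n-e}$) collapses the Möbius sum to the clean identity $\ch(k\,s_k)=p_k\,h_{n-k}$ for $1\le k\le n$. Expanding $p_k=\sum_{r=0}^{k-1}(-1)^r s_{(k-r,1^r)}$ and applying Pieri's rule to multiply by $h_{n-k}$ then yields $s_k=\sum_\lambda c_k^\lambda\chi^\lambda$, where for each fixed $r$ the contributing $\lambda$ are exactly those obtained from the hook $(k-r,1^r)$ by adjoining a horizontal $(n-k)$-strip. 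In particular the trivial representation occurs only in the $r=0$ layer, with $c_k^{(n)}=1/k$ (equivalently $1/k=\EE_{\mathrm{unif}}[s_k]$, the equilibrium value).

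\emph{Character evolution.} Differentiating $\EE[\rho^\lambda(\sigma_t)]$ and using the generator of the process shows $\EE[\chi^\lambda(\sigma_t)]=\tr(e^{tM_\lambda})$, where $M_\lambda=\sum_{\{i,j\}}a_{i,j}\bigl(\rho^\lambda((i\,j))-I\bigr)$. Each $\rho^\lambda((i\,j))$ is a symmetric orthogonal involution, so $M_\lambda$ is real symmetric and negative semidefinite, and $M_{(n)}=0$ reproduces the constant $1/k$. Hence $\EE[s_k(t)]-\tfrac1k=\sum_{\lambda\ne(n)}c_k^\lambda\tr(e^{tM_\lambda})$.

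\emph{Spectral gap.} I would bound each surviving term by $\dim V_\lambda\,e^{-t\lambda_1}$, which reduces to the eigenvalue estimate $\mu_{\max}(M_\lambda)\le-\lambda_1$ for every $\lambda\ne(n)$. For the hook shapes this is elementary: $V_{(n-r,1^r)}\cong\wedge^r V_{\mathrm{std}}$, and since a transposition has a one-dimensional $(-1)$-eigenspace on $V_{\mathrm{std}}$, the generator on $\wedge^r V_{\mathrm{std}}$ coincides with the derivation action of $M_{\mathrm{std}}=-\Delta|_{\mathrm{std}}$, so its eigenvalues are the sums $-(\lambda_{i_1}+\dots+\lambda_{i_r})$ over distinct $i_j\ge1$, all $\le-\lambda_1$. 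For general $\lambda$ the situation is genuinely harder: noting that $\spec(\mathcal L)=\bigcup_\lambda\spec(M_\lambda)$ for the interchange generator $\mathcal L$, the required bound is precisely the assertion that the spectral gap of $\mathcal L$ equals $\lambda_1$, i.e.\ Aldous' spectral gap identity, and here I would invoke the theorem of Caputo--Liggett--Richthammer. This is the main obstacle: fixing the decay rate at the sharp value $\lambda_1$ forces $\mu_{\max}(M_\lambda)\le-\lambda_1$ for all non-trivial $\lambda$, and for the non-hook shapes produced by the horizontal strips this is equivalent to the full Aldous conjecture rather than to anything elementary. (If $G$ is disconnected then $\lambda_1=0$ and the claim is trivial, as $|\EE s_k-\tfrac1k|\le n/k\le 3^n/k$.)

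\emph{Assembly.} By the triangle inequality,
\[
\Big|\EE[s_k(t)]-\tfrac1k\Big|\le e^{-t\lambda_1}\sum_{\lambda\ne(n)}|c_k^\lambda|\dim V_\lambda\le\frac{e^{-t\lambda_1}}{k}\sum_{r=0}^{k-1}\ \sum_{\nu}\dim V_\nu,
\]
the inner sum running over the partitions $\nu$ in the $r$-th Pieri layer. That layer is exactly the induced representation $\mathrm{Ind}_{S_k\times S_{n-k}}^{S_n}\bigl(V_{(k-r,1^r)}\boxtimes\mathbf 1\bigr)$, of dimension $\binom nk\dim V_{(k-r,1^r)}=\binom nk\binom{k-1}{r}$. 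Summing over $r$ gives $\binom nk 2^{k-1}$, and since $\binom nk 2^k\le\sum_j\binom nj 2^j=3^n$ we obtain $\sum_\lambda|c_k^\lambda|\dim V_\lambda\le\tfrac1k\binom nk 2^{k-1}\le\tfrac{3^n}{k}$, which finishes the proof (in fact with a factor $2$ to spare).
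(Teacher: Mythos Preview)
Your proof is correct and follows essentially the same route as the paper: expand $\alpha_k$ into irreducible characters via $\ch(k\,\alpha_k)=p_k h_{n-k}$ and Pieri's rule, evolve each character as $\tr e^{-tU_\rho(\Delta)}$, and invoke Caputo--Liggett--Richthammer to bound every non-trivial eigenvalue below by $\lambda_1$. The only real difference is in the dimension bookkeeping: the paper first carries out the full cancellation (Theorem~\ref{thm:k}) so that only shapes $[a,b,1^c]$ survive, and then bounds $\sum_\rho \dim U_\rho$ by $\sum_{a+b+c=n}\binom{n}{a,b,c}=3^n$, whereas you bound \emph{before} cancellation by summing the Pieri layers, obtaining $\binom{n}{k}2^{k-1}\le 3^n/2$. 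Your counting is arguably cleaner here since it does not require the explicit post-cancellation list of Theorem~\ref{thm:k}, and it even saves a factor of~$2$; the paper's version, on the other hand, makes transparent that at most $[a,b,1^c]$ shapes ever contribute.
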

The point about this result is its generality --- it holds for any
graph. In particular examples that we tried the estimate was worse
than the known or conjectured mixing time. But for general graphs it
seems to be the best known.

To proceed, let us recall a few basic facts about the representations
of $S_n$. For a full treatment see the books \cite{FH91,JK81,S00}. A representation of $S_n$ is a group homomorphism
$\tau:S_n\to \mathrm{GL}_k(\C)$ for some $k$, typically denoted by $\dim \tau$. Its character, denoted by $\chi_{\tau}$, is an
element of $L^2(S_n)$ defined by $\chi_{\tau}(g)=\tr(\tau(g))$. 
Now, the irreducible representations of $S_n$ are indexed
by partitions of $n$, namely, by sequences
$\lambda=[\lambda_1,\lambda_2,\dotsc ,\lambda_k]$ with $\lambda_1\ge
\lambda_2\ge \dotsb\ge \lambda_k>0$ and $\sum_{i=1}^k \lambda_i=n$ (we
denote this by $n \vdash \lambda$). A nice graphical representation of
partitions is using \emph{Young diagrams}, i.e.\ drawing each $\lambda_i$ as a line of boxes from top
to bottom, e.g.
\[
[5,1]={\tiny\yng(5,1)}\qquad [3,2,1]={\tiny\yng(3,2,1)} \qquad [2,1^3]={\tiny\yng(2,1,1,1)}.
\]
To each partition $n \vdash \lambda$ (and hence, for each young diagram with $n$ boxes) corresponds an irreducible representation, which we shall denote by
$U_\lambda$. For brevity, we denote the character of $U_\lambda$ by $\chi_\lambda$. 
Fix now some $1 \leq k \leq n$ and define 
\begin{equation}\label{eq:defalpha}
\alpha_k(g)=\#\{\textrm{cycles of length $k$ in $g$}\}.
\end{equation}
Now, $\alpha_k(g)$ depends only on the cycle structure of $g$, i.e.\ is
a class function, and hence it is a linear combinations of characters
of irreducible representations. Our main result is the precise
decomposition.

\begin{theorem}\label{thm:k} For any $n$ and $k$, 
\[
\alpha_k=\frac{1}{k}\sum_{n\vdash \rho}a_\rho \chi_\rho,
\]
where
\begin{equation}\label{eq:defak}
a_\rho=\begin{cases}
1&\rho=[n]\\
(-1)^{i+1} & \rho=[k-i-1,n-k+1,1^i]\mbox{ for some
}i\in\{0,\dotsc,2k-n-2\}\\
(-1)^i & \rho=[n-k,k-i,1^i]\mbox{ for some
}i\in\{\max\{2k-n,0\},\dotsc,k-1\}\\
0&\mbox{otherwise}
\end{cases}
\end{equation}
\end{theorem}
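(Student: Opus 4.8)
The plan is to compute the decomposition of the class function $\alpha_k$ into irreducible characters by directly evaluating the inner product $\langle \alpha_k, \chi_\rho \rangle = \frac{1}{n!}\sum_{g \in S_n} \alpha_k(g)\overline{\chi_\rho(g)}$ for each partition $\rho$, since the coefficient of $\chi_\rho$ in any class function is exactly this inner product. The key observation is that $\alpha_k(g) = \#\{\text{$k$-cycles in }g\}$ can be rewritten combinatorially: summing $\alpha_k(g)$ against $\chi_\rho(g)$ over the group amounts to counting, for each ordered $k$-tuple of distinct points forming a $k$-cycle $c$, the contribution $\chi_\rho(g)$ over all $g$ containing that cycle. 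I would first reduce to a cleaner generating identity. Writing $\sigma$ for the restriction of $g$ to the complementary $(n-k)$ points, one has $g = c \cdot \sigma$ with $c$ a fixed $k$-cycle on a chosen $k$-subset and $\sigma$ an arbitrary permutation of the remaining $n-k$ points. This factorization is the engine: it turns the sum over $g$ into a sum over $k$-subsets times a sum over $\sigma \in S_{n-k}$.

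The main technical step is then to evaluate $\sum_{\sigma \in S_{n-k}} \chi_\rho(c\sigma)$. Here I would invoke the branching/restriction structure together with the Frobenius character formula, or more efficiently the Murnaghan–Nakayama rule, which computes $\chi_\rho$ on a permutation with a distinguished $k$-cycle in terms of removing border strips (rim hooks) of length $k$ from the Young diagram of $\rho$. Concretely, averaging over $\sigma$ projects onto the trivial representation of $S_{n-k}$, so $\frac{1}{(n-k)!}\sum_\sigma \chi_\rho(c\sigma)$ is the multiplicity-weighted character value that, by Murnaghan–Nakayama applied to the $k$-cycle $c$ followed by the identity on the rest, equals $\sum_\xi (-1)^{\height(\xi)}$ summed over border strips $\xi$ of size $k$ whose removal from $\rho$ leaves the trivial/row-shape $[n-k]$ on the complement. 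The crucial point is that the complement must itself decompose into a single row (the trivial representation of $S_{n-k}$), so the only surviving $\rho$ are those whose diagram, after removing a $k$-border-strip, becomes the single row $[n-k]$.

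This last condition is exactly what pins down the list in \eqref{eq:defak}: the partitions $\rho$ from which one can remove a border strip of length $k$ to obtain the horizontal strip $[n-k]$ are precisely the hook-like shapes $[n-k, k-i, 1^i]$ and the shapes $[k-i-1, n-k+1, 1^i]$, with $[n]$ itself arising from the degenerate removal, and the sign $(-1)^{\height(\xi)}$ produces the alternating signs $\pm 1$. The height of the removed border strip equals the number of rows it spans minus one, which in each family is $i$ or $i+1$, matching the stated signs $(-1)^i$ and $(-1)^{i+1}$. I would then carefully verify the index ranges $i \in \{0,\dots,2k-n-2\}$ and $i \in \{\max\{2k-n,0\},\dots,k-1\}$ by checking precisely which hooks of these shapes admit a valid $k$-border-strip removal leaving a single row, and confirm the $\frac{1}{k}$ prefactor, which arises because each $k$-cycle on a fixed $k$-subset is counted $k$ times by the $k$ cyclic choices of starting point (or equivalently from the $(k-1)!$ distinct $k$-cycles normalization against the subset count).

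The step I expect to be the main obstacle is the careful bookkeeping in the Murnaghan–Nakayama application: determining exactly which Young diagrams admit a border strip of size $k$ whose removal yields a single row, and getting every sign and every endpoint of the index ranges right, including the boundary overlaps between the two families (e.g.\ when $2k - n - 2 < 0$ one family is empty, and the degenerate case $\rho = [n]$ must be separated out). Verifying consistency — for instance by checking the total against the known value $\sum_\rho a_\rho \dim U_\rho = k \cdot \alpha_k(\id)$-type identities, or by testing small cases $k=1$ and $k=n$ (the latter recovering Theorem~\ref{thm:n}) — will be the natural sanity check before asserting the general formula.
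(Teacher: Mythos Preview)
Your approach is correct and takes a genuinely different route from the paper's, though the two are dual to one another in the symmetric-function picture. The paper first shows that $\ch(\alpha_k)=\tfrac1k\,p_k H_{n-k}$ (where $p_k=\sum_i x_i^k$), expands $p_k$ as the alternating sum of hook Schur functions $\sum_i(-1)^iS_{[k-i,1^i]}$ (this is the $k=n$ case), and then multiplies each hook by $H_{n-k}=S_{[n-k]}$ using \emph{Pieri's rule}; the final list of shapes emerges only after a telescoping cancellation between the $A_i$ and $B_i$ pieces. Your argument multiplies in the opposite order: rewriting $\langle\alpha_k,\chi_\rho\rangle$ as a sum over $\sigma\in S_{n-k}$ of $\chi_\rho(c\sigma)$ and applying Murnaghan--Nakayama to peel off the $k$-cycle, you are in effect computing the coefficient of $s_\rho$ in $p_k\cdot s_{[n-k]}$ via the \emph{border-strip} rule. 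This is more direct for the problem at hand: the requirement that $\rho\setminus\xi=[n-k]$ forces $\rho/[n-k]$ to be a single rim hook, which immediately yields the two families $[n-k,k-i,1^i]$ and $[k-i-1,n-k+1,1^i]$ (and $[n]$) with no cancellation step, and the heights $i$ and $i+1$ of those rim hooks give the signs on the nose. The paper even notes, in the remark after Lemma~\ref{lemma:alpha_n}, that Murnaghan--Nakayama handles the case $k=n$ directly; your proposal is exactly the extension of that remark to general $k$. What the paper's route buys is a general recipe (Lemmas~\ref{lemma:average}--\ref{lemma:ch}) for decomposing any class function, whereas your route reaches the specific answer for $\alpha_k$ with less machinery and no telescoping.
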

Let us describe this verbally (ignoring the diagram $[n]$ which has a somewhat special role). If $k>(n+1)/2$, start with $[k-1, n-k+1]$, with a minus sign. Now
drop boxes from the first row into the leftmost column until the first and second row are equal. Then drop in a single step two boxes, one from each of the first two rows to the leftmost column. Then start dropping
boxes from the second row until you reached a hook-shaped diagram. The sign keeps changing in each step.
If $k\le n/2$ start with the diagram $[n-k,k]$ with a plus sign, and drop boxes from the second row to the leftmost column until reaching a hook-shaped diagram, again switching sign at
each step. The case $k=(n+1)/2$ is similar except you start from
$[n-k,k-1,1]$ with a minus sign.

It is now clear what is special in the case $k=n$. In this case only
hook-shaped diagrams appear in the sum. For the hook-shaped diagrams
there is an explicit formula for the relevant eigenvalues discovered
by Bacher \cite{B94} (see also the appendix of \cite{AK09}). Let us remark that for $k<n$ the probability $\PP(s_k(t)=1)$ is not a function of the eigenvalues of the graph. In other words, one may find two \emph{isospectral} graphs for which these probabilities differ. We will
explain both facts (i.e.\ the conclusion of Theorem \ref{thm:n} from
Theorem \ref{thm:k} and the isospectral examples) in section
\ref{sec:cor} below. T\'oth's conjecture will be stated and discussed
in section \ref{sec:Toth}. 

We remark that Theorem \ref{thm:k} strengthens results by Eriksen and
Hultman \cite[\S 5]{EH04} who found the decomposition of $\sum\alpha_k$,
i.e.\ of the number of cycles of a permutations. The formulas of
\cite{EH04} are quite short and reveal some patterns in the numbers
$a_\rho$. For example, for every $\rho$ of the form $[a,b,1^c]$, $a_\rho\ne 0$ for exactly two
values of $k$, with opposite signs.

\section{Notations and preliminaries}\label{sec:notations}
Let $A=\{a_{i,j}\}_{1\le
  i<j\le n}$ be a collection of non-negative numbers which we consider
as a weighted graph. The random walk on $S_n$ associated with the weighted
graph $A$ is a process in continuous time starting from the identity
permutation $\mathbf{1}$ on $S_n$ and going from $g$ to $(ij)g$ with
rate $a_{i,j}$. Formally, consider $L^2(S_n)$, both as a Hilbert space with the standard inner product, and as an $\mathbb R$-algebra, via the \emph{group ring} structure.
Define the Laplacian as the element of
$L^2(S_n)$ given by
\[
\Delta=\Delta_A=\sum_{i<j}a_{i,j}(\mathbf{1}-(ij))
\]
where $\mathbf{1}$ is the element of $L^2(S_n)$ equal to 1 in the
identity permutation, and 0 everywhere else; and $(ij)$ is similarly a
singleton at the transposition $(ij)$. The distribution of the
location of our process at time $t$ is 
\[
e^{-t\Delta}=\sum_{k=0}^\infty\frac{(-t\Delta)^k}{k!}
\]
In particular for $\alpha_k$
defined by (\ref{eq:defalpha}),
\[
\EE(s_k(t))=
\sum_{g \in S_n}\left(e^{-t\Delta}\right)(g)\alpha_k(g)=n!\langle e^{-t\Delta},\alpha_k\rangle
\]
where here and below $\langle\cdot,\cdot\rangle$ stands for the
standard inner product in $L^2(S_n)$, i.e.\ $\langle a,b\rangle =
\nicefrac{1}{n!}\sum_{g\in S_n} \linebreak[4]a(g)\overline{b(g)}$.

For the proof of Theorem \ref{thm:k} we will need a second set of representations of $S_n$, this time 
\emph{reducible} representations. For $n \vdash \rho$, let $T_\rho<S_n$ be the
subgroup of all permutations fixing the sets $\{1,\dotsc,\rho_1\}$,
$\{\rho_1+1,\dotsc,\rho_1+\rho_2\}$, etc\@. As a group
$T_\rho\cong S_{\rho_1}\times \dotsb\times S_{\rho_r}$. Now,
$S_n$ acts on the left cosets of $T_\rho$, i.e.\ $\{hT_\rho\}_{h\in S_n}$, and using these cosets as a basis 
we obtain a representation of $S_n$, which we will denote by $V_\rho$. 
Readers familiar with exclusion
processes might find it convenient to think about $V_\rho$ as $\mathbb R^X$ where $X$ is the space of configurations of
the exclusion process with $\rho_1$ particles of colour 1, $\rho_2$
particles of colour 2 etc.\ --- considering $\Delta$ as an operator on
$V_\rho$ it is easy to verify that one gets an identical
process. We denote
\begin{equation}\label{eq:defVk}
\psi_\rho=\chi_{V_\rho}.
\end{equation}
It is well known that the representations $V_\rho$ are generally
reducible and their irreducible components, consist of all $U_\sigma$ for
$\sigma\trianglerighteq\rho$, where $\trianglerighteq$ is the
\emph{domination} order \cite[Corollary 4.39]{FH91} --- we say that $\sigma\trianglerighteq\rho$
when you can reach $\rho$ from $\sigma$ by a series of ``toppling'' of
a box of the Young diagram to a lower row which keep the structure
of a Young diagram. Alternatively, $\sigma\trianglerighteq\rho$ is equivalent to
\[
\sum_{i=1}^j\sigma_i \ge \sum_{i=1}^j\rho_i\qquad\forall j.
\]


\section{Character decomposition}

In this section we prove Theorem \ref{thm:k}. We go about it by describing a more general method for expressing a class function on $S_n$ as a linear combination of characters, and then applying it to our case.

Given a function $f:S_n\rightarrow \R$ that is a class function (i.e.\ satisfies $f(hgh^{-1})=f(g)$ for all $g,h\in S_n$), it can be expressed as a linear combination of the characters of $S_n$ (see, e.g., \cite[Proposition 2.30]{FH91}).
By the character orthogonality relations (ibid.), we have
\begin{equation}
f = \sum_{n \vdash \rho} \langle f, \chi_\rho \rangle \chi_\rho
\label{eq:orthog}
\end{equation}

As it is often hard to calculate the inner products $\langle f, \chi_\rho \rangle$ directly, we start by calculating $\langle f, \psi_\lambda \rangle$, where $\psi_\lambda=\chi_{V_\lambda}$ and $V_\lambda$ are the
``exclusion-like'' reducible representations defined just before
(\ref{eq:defVk}).
\begin{lemma} \label{lemma:average}
We have $\langle f,  \psi_\lambda \rangle=\frac1{\#(T_\lambda)} \sum_{q\in T_\lambda} f(q) $.
\end{lemma}

\begin{proof} We have $$ \langle f, \psi_\lambda \rangle = \frac1{n!}\sum_{g \in S_n} \psi_\lambda(g) f(g) $$
Recall from \S \ref{sec:notations} that $V_\lambda$ is obtained from the action of
$S_n$ on the cosets of a $T_\lambda<S_n$. By the definition of trace, $\psi_{\lambda}(g)$ equals the number of cosets of $T_{\lambda}$ fixed by $g$. A coset $h T_\lambda$ is fixed by $g$ iff $h^{-1} g h \in T_\lambda$. Hence,

$$ \langle f, \psi_\lambda \rangle = \frac1{n!} \sum_{hT_\lambda \in S_n/T_\lambda}\; \sum_{g: h^-1gh \in T_\lambda} f(g). \label{eq:youngrule} $$ 
Let us make a change of variables, $q = h^{-1} g h $. Since $f$ is a class function, we have

\[ \langle f, \psi_\lambda \rangle = \frac1{n!\#(T_\lambda)}
\sum_{h\in G} \sum_{q\in T_\lambda} f(hqh^{-1}) =
\frac1{n!\#(T_\lambda)} \sum_{h\in G} \sum_{q\in T_\lambda} f(q) =
\frac1{\#(T_\lambda)} \sum_{q\in T_\lambda} f(q). 
\qedhere
\]
\end{proof}

Now, by Young's rule \cite[Corollary 4.39]{FH91}, the characters $\psi_\lambda$ and the characters $\chi_\lambda$ are related by the linear equations 
$$ \psi_\lambda = \sum_{n \vdash \mu} K_{\mu \lambda}\chi_\mu $$
Where the numbers $K_{\mu \lambda}$, called the Kostka numbers, are defined as follows:
Let $\lambda=[\lambda_1,\dotsc,\lambda_r]$, then $K_{\mu \lambda}$ is the number of ways the Young diagram $\mu$ can be filled with $\lambda_1$ $1$'s, $\lambda_2$ $2$'s, etc., such that each row is nondecreasing, and each column is strictly increasing. The numbers $K_{\mu \lambda}$
 satisfy $K_{\mu \lambda}=0$ whenever $\mu < \lambda$ (with respect to the lexicographic order), and $K_{\mu \mu}=1$. (See \cite{FH91}, appendix A). Hence,
$$ \langle f,\psi_\lambda \rangle = \sum_{n \vdash \mu} K_{\mu \lambda}\langle f,\chi_\mu \rangle $$
In other words the numbers $\langle f,\chi_\mu\rangle$ satisfy
a system of linear equations, whose coefficient matrix $(K_{\mu \lambda})$
is triangular with $1$'s on the diagonal, hence invertible. 

The resulting system of equations has a more elegant form when expressed in terms of \emph{symmetric polynomials}.

Fix an integer $m \geq n$ (whose value is not important), and consider
the ring of symmetric polynomials in $m$ variables $x_1,\dotsc ,x_m$
over $\C$. Consider the following homogeneous symmetric polynomials of degree $n$ (see \cite{FH91}, ibid. for more details):

\begin{itemize}
\item For $n \vdash \lambda=[\lambda_1,\dotsc,\lambda_r]$, $M_\lambda = \sum_\alpha x^\alpha$, where $\alpha=(\alpha_1,\dotsc,\alpha_n)$ goes over all the possible permutations of $(\lambda_1,\dotsc \lambda_r,0,\dotsc,0)$.
\item The Schur polynomials $S_\mu= \sum_{\lambda}K_{\mu \lambda}M_{\lambda}$
\item The full homogeneous polynomial $H_n$, defined as the sum of all monomials of degree $n$. It is easy to see that for all $n \vdash \lambda$, $K_{[n]\lambda}=1$. Hence, $H_n = \sum_{n \vdash \lambda} M_\lambda = \sum_{n \vdash \lambda} K_{[n]\lambda} M_\lambda = S_{[n]}$.
\end{itemize}

Recall also the Frobenius characteristic map $\ch$, defined on the class functions of $S_n$, which sends an irreducible character $\chi_{\mu}$ to its corresponding Schur polynomial $S_{\mu}$, and is extended by linearity. Clearly, decomposing a class function into irreducible characters, $f=\sum_{\mu}a_\mu \chi_{\mu}$ is equivalent to decomposing its image $\ch(f)$ into Schur polynomials, $\ch(f)=\sum_{\mu} a_{\mu} S_{\mu}$. By (\ref{eq:youngrule}), we have

$$ \ch(f)=\sum_{\mu} \langle f, \chi_{\mu} \rangle S_{\mu} =
   \sum_{\mu} \langle f, \chi_{\mu} \rangle \sum_{\lambda} K_{\mu\lambda} M_{\lambda} =
   \sum_{\lambda} \langle f, \psi_{\lambda} \rangle M_\lambda $$
   
We conclude:

\begin{lemma} \label{lemma:ch} Let $f$ be a class function on $S_n$. Then $$\ch(f) = \sum_{\lambda} \left ( \frac{1}{\#(T_{\lambda})} \sum_{g\in T_{\lambda}} f(g) \right ) M_{\lambda}. $$

\end{lemma}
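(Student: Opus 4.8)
The plan is to obtain the claimed expansion by pushing the orthogonal decomposition of $f$ through the Frobenius characteristic map $\ch$ and then re-bundling the resulting terms according to the monomial symmetric polynomials $M_\lambda$ rather than the Schur polynomials $S_\mu$. Concretely, I would start from the orthogonality expansion $f=\sum_{n\vdash\mu}\langle f,\chi_\mu\rangle\chi_\mu$ of (\ref{eq:orthog}), apply $\ch$, and use its linearity together with the defining property $\ch(\chi_\mu)=S_\mu$ to obtain $\ch(f)=\sum_\mu\langle f,\chi_\mu\rangle S_\mu$. This is the natural first move, since $\ch$ is built precisely to convert irreducible characters into Schur polynomials, and it reduces the problem to a purely symmetric-function computation.

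Next I would substitute the definition $S_\mu=\sum_\lambda K_{\mu\lambda}M_\lambda$ and interchange the two (finite) sums, writing $\ch(f)=\sum_\lambda\big(\sum_\mu K_{\mu\lambda}\langle f,\chi_\mu\rangle\big)M_\lambda$. The \emph{crucial observation} is that the coefficient of $M_\lambda$ is exactly $\langle f,\sum_\mu K_{\mu\lambda}\chi_\mu\rangle=\langle f,\psi_\lambda\rangle$, using Young's rule $\psi_\lambda=\sum_\mu K_{\mu\lambda}\chi_\mu$ and linearity of the inner product in its first argument. In other words, the two Kostka-number changes of basis — the one relating $S_\mu$ to $M_\lambda$ on the symmetric-function side and the one relating $\chi_\mu$ to $\psi_\lambda$ on the representation side — match each other and collapse, leaving the $\psi_\lambda$-inner products as the coefficients.

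Finally, Lemma \ref{lemma:average} identifies $\langle f,\psi_\lambda\rangle$ with the average $\frac{1}{\#(T_\lambda)}\sum_{g\in T_\lambda}f(g)$, and inserting this into the expansion yields the stated formula. I expect no genuine obstacle here: every sum in sight is finite, so the interchange of summation is automatic, and all the structural inputs (character orthogonality, the map $\ch$, Young's rule, and Lemma \ref{lemma:average}) are already in hand. The only point deserving care is conceptual rather than computational — namely recognizing that it is precisely the common appearance of the Kostka matrix $(K_{\mu\lambda})$ on both sides that makes the coefficients simplify onto the readily computable quantities $\langle f,\psi_\lambda\rangle$, which is what makes the lemma useful for the explicit evaluation of $\ch(\alpha_k)$ to follow.
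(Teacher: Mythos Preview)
Your proposal is correct and follows essentially the same route as the paper: expand $\ch(f)=\sum_\mu\langle f,\chi_\mu\rangle S_\mu$, substitute $S_\mu=\sum_\lambda K_{\mu\lambda}M_\lambda$, swap the finite sums, recognise the coefficient of $M_\lambda$ as $\langle f,\psi_\lambda\rangle$ via Young's rule, and finish with Lemma~\ref{lemma:average}. The only (harmless) slip is that you invoke linearity ``in the first argument'' when you are in fact pulling the sum $\sum_\mu K_{\mu\lambda}\chi_\mu$ out of the \emph{second} slot of the inner product.
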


We now apply this to the class functions $\alpha_k$ (Recall the definition of $\alpha_k$, (\ref{eq:defalpha})).  

\begin{lemma} \label{lemma:ch_formula}
We have for all $1\leq k \leq n$, $\ch(\alpha_k)=\frac1k(\sum_{i=1}^m x_i^k)H_{n-k}(x_1,\dotsc,x_m)$.
\end{lemma}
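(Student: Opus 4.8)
The plan is to feed $\alpha_k$ into Lemma \ref{lemma:ch} and reduce the whole statement to a single combinatorial count. By that lemma,
\[
\ch(\alpha_k) = \sum_{n\vdash\lambda}\Big(\tfrac{1}{\#(T_\lambda)}\sum_{g\in T_\lambda}\alpha_k(g)\Big)M_\lambda,
\]
so everything comes down to evaluating, for each partition $\lambda=[\lambda_1,\dotsc,\lambda_r]$, the number $c_\lambda:=\tfrac{1}{\#(T_\lambda)}\sum_{g\in T_\lambda}\alpha_k(g)$, i.e.\ the expected number of $k$-cycles in a uniformly chosen element of the Young subgroup $T_\lambda$.

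First I would exploit the block structure $T_\lambda\cong S_{\lambda_1}\times\dotsb\times S_{\lambda_r}$. Writing $g=(g_1,\dotsc,g_r)$, the cycles of $g$ are exactly the union of the cycles of the $g_i$ inside their respective blocks $\{1,\dotsc,\lambda_1\}$, $\{\lambda_1+1,\dotsc\}$, and so on; hence $\alpha_k$ is additive, $\alpha_k(g)=\sum_i\alpha_k(g_i)$. Since the uniform measure on $T_\lambda$ is the product of the uniform measures on the factors, this gives $c_\lambda=\sum_{i=1}^r E_{\lambda_i}$, where $E_m$ denotes the expected number of $k$-cycles in a uniform permutation of $S_m$. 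To compute $E_m$ I would double-count incidences $(g,C)$ with $C$ a $k$-cycle of $g$: choose the $k$-element support, the cyclic order on it, and an arbitrary permutation of the remaining points, giving $\binom{m}{k}(k-1)!(m-k)!=m!/k$ when $m\ge k$ and $0$ when $m<k$. Dividing by $m!$ yields $E_m=\tfrac1k$ for $m\ge k$ and $E_m=0$ otherwise, so that $c_\lambda=\tfrac1k\#\{i:\lambda_i\ge k\}$ and
\[
\ch(\alpha_k)=\frac1k\sum_{n\vdash\lambda}\#\{i:\lambda_i\ge k\}\,M_\lambda.
\]

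It then remains to recognize the right-hand side as $\tfrac1k\big(\sum_{i=1}^m x_i^k\big)H_{n-k}$. Writing $p_k=\sum_{i=1}^m x_i^k$ and recalling that $H_{n-k}$ is the sum of all degree-$(n-k)$ monomials, I would extract the coefficient of a fixed monomial $x^\gamma$ of degree $n$ in $p_k H_{n-k}$: it equals the number of indices $j$ with $\gamma_j\ge k$, since each such $j$ provides the unique factorization $x^\gamma=x_j^k\cdot x^{\gamma-ke_j}$ with the second factor a legitimate degree-$(n-k)$ monomial. As $p_k H_{n-k}$ is symmetric, the coefficient of $M_\lambda$ in its monomial expansion equals the coefficient of the sorted representative $x^\lambda$, namely $\#\{i:\lambda_i\ge k\}$; this matches the display above and proves the lemma. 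The only slightly delicate points are the incidence count for $E_m$ and the coefficient-matching in the monomial basis (together with keeping $m\ge n$ so that every partition of $n$ fits into $m$ variables); both are elementary, and I expect no genuine obstacle beyond this bookkeeping.
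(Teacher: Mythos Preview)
Your proof is correct and follows essentially the same route as the paper: reduce via Lemma~\ref{lemma:ch} to the average of $\alpha_k$ over $T_\lambda$, compute that average to be $\tfrac1k\#\{i:\lambda_i\ge k\}$ by an incidence count, and then match monomial coefficients with $(\sum_i x_i^k)H_{n-k}$. The only cosmetic difference is that you first invoke additivity of $\alpha_k$ over the blocks and compute the single-block expectation $E_m$, whereas the paper counts directly over $k$-cycles in $T_\lambda$; these are the same calculation organized slightly differently.
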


\begin{proof} Let us define a function $\beta_k$ on the set of partitions of $n$ by 
$$\beta_k([\lambda_1,\dotsc,\lambda_r])= \#\{i: \lambda_i \geq k \}. $$
By lemma \ref{lemma:average}, 
$$ \langle \alpha_k, \psi_\lambda \rangle =  \frac1{\#(T_\lambda)} \sum_{q\in T_\lambda} \alpha_k(q). $$
The sum $\sum_{q\in T_\lambda} \alpha_k(q) $ can be evaluated by summing over all possible $k$-cycles $c\in T_\lambda$, the number of elements of $T_\lambda$ such that $c$ is one of their cycles. For any $i$ such that $\lambda_i\geq k$, there are $\binom{\lambda_i}{k} \cdot (k-1)!$ choices for a cycle $c$ in the $S_{\lambda_i}$-factor of $T_\lambda$, and $\lambda_1! \lambda_2! \dotsb (\lambda_i - k)! \dotsb \lambda_r!$ choices for an element $g\in T_\lambda$ with $c$ as a cycle. Hence each such $i$ contributes to the sum
$$ \binom{\lambda_i}{k} \cdot (k-1)! \cdot \lambda_1! \lambda_2! \dotsb (\lambda_i - k)! \dotsb \lambda_r! = \frac {\#(T_\lambda)} k$$ Obviously, if $\lambda_i < k$ then there are no $k$-cycles in the $S_{\lambda_i}$-factor, and the contribution is $0$. Hence,
\[
\langle \alpha_k, \psi_\lambda \rangle= \frac1{\#(T_\lambda)} \sum_{i:\lambda_i
  \geq k} \frac {\#(T_\lambda)} k = \frac 1 k \beta_k(\lambda).
\]
By lemma \ref{lemma:ch},
$$ \ch(\alpha_k)=\frac1k\sum_{\lambda} \beta_k(\lambda)M_{\lambda}. $$
A moment's reflection shows that  $ \sum_{\lambda} \beta_k(\lambda)M_{\lambda} = 
(\sum_{i=1}^m x_i^k)H_{n-k}$. Indeed, each monomial
$x_1^{\alpha_1}\dotsb \discretionary{}{\mbox{$\cdot\,$}}{}
 x_m^{\alpha_m}$ of degree $n$ appears on the left-hand side
with coefficient $\#\{i:\alpha_i\ge k\}$ (by the definition of
$\beta$), and the same is on the right-hand side. This finishes the lemma.
\end{proof}

Our goal is to express $\ch(\alpha_k)$ as a linear combination of Schur
polynomials. Let us start with the case of $k=n$. 

\begin{lemma} \label{lemma:alpha_n} $\ch(\alpha_n) = \frac1n\sum_{i=0}^{n-1} (-1)^i S_{[n-i,1^i]} $.
\end{lemma}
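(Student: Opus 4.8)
The plan is to use Lemma \ref{lemma:ch_formula} with $k=n$, which gives $\ch(\alpha_n)=\frac1n H_0\cdot(\sum_{i=1}^m x_i^n)=\frac1n\sum_{i=1}^m x_i^n$, since $H_0=1$. So the entire problem reduces to a single identity about symmetric polynomials: the power sum $p_n:=\sum_i x_i^n$ must equal $\sum_{i=0}^{n-1}(-1)^i S_{[n-i,1^i]}$, the signed alternating sum of the Schur polynomials indexed by hook shapes. This is a classical fact (the dual Jacobi--Trudi / Murnaghan--Nakayama expansion of a power sum in the hook case), but I would want to prove it self-containedly since the paper is trying to be explicit.

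First I would fix notation: the hook $[n-i,1^i]$ is the Young diagram with one row of length $n-i$ and a column of $i$ extra boxes, so as $i$ ranges over $0,\dots,n-1$ we sweep through all hooks of size $n$, from the full row $[n]$ (at $i=0$) to the full column $[1^n]$ (at $i=n-1$). The cleanest route, I think, is the Jacobi--Trudi identity expressing $S_{[n-i,1^i]}$ as a determinant in the complete homogeneous symmetric functions $H_j$ (with $H_0=1$, $H_j=0$ for $j<0$). For a hook this determinant collapses dramatically: one gets $S_{[n-i,1^i]}=\sum_{j}(-1)^{?}H_{n-i-?}E_{?}$-type telescoping, and more precisely the hook Schur function satisfies the well-known expansion $S_{[n-i,1^i]}=\sum_{a+b=i}(-1)^{b}\,H_{n-i+a}\,E_{?}$; rather than chase the exact indices I would instead use the generating-function identity $\sum_{i\ge 0}(-1)^i S_{[n-i,1^i]}t^{?}$ against the standard relation between the generating series $H(t)=\sum H_j t^j$ and $E(t)=\sum E_j t^j$, namely $H(t)E(-t)=1$.

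Concretely, the key step is the classical generating-function formula $\sum_{n\ge 1} p_n\, t^n = t\,\frac{H'(t)}{H(t)}$ (Newton's identity in symmetric-function form), combined with the hook expansion $\sum_{i=0}^{n-1}(-1)^i S_{[n-i,1^i]}$, which is exactly the coefficient of $t^n$ in the logarithmic derivative $t\,\frac{d}{dt}\log H(t)$. Equating coefficients of $t^n$ gives the desired identity $p_n=\sum_{i=0}^{n-1}(-1)^i S_{[n-i,1^i]}$. Alternatively, and perhaps more transparently for the reader, I would prove it by a telescoping induction: using the hook Pieri-type recursion $S_{[n-i,1^i]}=H_{n-i}E_i - (\text{lower hook terms})$, the signed sum telescopes so that all mixed $H$-$E$ products cancel pairwise, leaving only the pure power-sum contribution $p_n$. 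I would present whichever of the two (generating function versus telescoping determinant) is shortest to write.

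The main obstacle I anticipate is purely bookkeeping: getting the signs and the index ranges in the hook Schur expansion to line up so that the cancellation is exact and nothing is left over at the two extreme hooks $[n]$ and $[1^n]$. The boundary terms $H_n$ (from $i=0$, where the ``$E$'' factor is $E_0=1$) and $(-1)^{n-1}E_n$ (from $i=n-1$) are the ones that do not get cancelled internally and must combine correctly with the logarithmic-derivative expansion to reproduce $p_n$; verifying that these endpoints match is the delicate point. Once the generating-function identity $H(t)E(-t)=1$ is invoked, this becomes a routine check, so I expect the difficulty to be confined to that sign/endpoint verification rather than to any conceptual step.
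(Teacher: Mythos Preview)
Your reduction via Lemma \ref{lemma:ch_formula} to the symmetric-function identity $p_n=\sum_{i=0}^{n-1}(-1)^iS_{[n-i,1^i]}$ is exactly how the paper begins, but the route to that identity differs. The paper stays entirely in the monomial basis: it writes each hook Schur function as $S_{[n-i,1^i]}=\sum_\lambda K_{[n-i,1^i]\lambda}M_\lambda$, observes directly from the definition of Kostka numbers that $K_{[n-i,1^i]\lambda}=\binom{r(\lambda)-1}{i}$ (the top-left box must carry a $1$, and one chooses which $i$ of the remaining labels $2,\dots,r(\lambda)$ go in the column), and then the binomial identity $\sum_i(-1)^i\binom{r-1}{i}=0$ for $r>1$ kills every $M_\lambda$ except $M_{[n]}=p_n$. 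This is short, entirely self-contained, and uses only objects already set up in the paper.

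Your proposed routes through Jacobi--Trudi, Newton's identities, or the generating series $H(t)E(-t)=1$ are all legitimate and classical, but import more machinery and, as you anticipate, leave real bookkeeping to be done. One caution: the generating-function sketch as you wrote it is circular --- you assert that the alternating hook sum ``is exactly the coefficient of $t^n$ in $t\,H'(t)/H(t)$'', but by Newton that coefficient \emph{is} $p_n$, which is precisely the claim. So you would still need the telescoping or an explicit Jacobi--Trudi computation to close the argument, and that is where the sign and endpoint checks you worry about actually live. The paper's Kostka-number argument sidesteps all of this.
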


\begin{proof} By lemma \ref{lemma:ch_formula}, $\ch(\alpha_n)=\frac 1n \sum_i x_i^n = \frac1n M_{[n]}$. On the other hand, for all $0\leq i \leq n-1$ we have
$$ S_{[n-i,1^i]} = \sum_{\lambda} K_{[n-i,1^i]\lambda} M_{\lambda}. $$
Let $\lambda$ have $r$ rows. By definition of the Kostka numbers, we have $K_{[n-i,1^i]\lambda}=\binom{r-1}{i}$, and $K_{[n-i,1^i]\lambda}=0$ for $i\geq r$, since the top left box of $[n-i,1^i]$ has to be numbered $1$, and the whole configuration is determined by the choice of distinct $i$ numbers out of $2,\dotsc,r$ to be placed in the leftmost column in ascending order. 
Denoting by $r(\lambda)$ the number of rows in $\lambda$, we get
$$ \sum_{i=0}^{n-1}(-1)^iS_{[n-i,1^i]} = \sum_{\lambda} M_{\lambda} \sum_{i=0}^{n-1} (-1)^i \binom{r(\lambda)-1}{i} $$
By the binomial identity, the inner sum is $0$ unless $r(\lambda)=1$,
i.e.\ $\lambda=[n]$, in which case the inner sum is $1$. We get $ \sum_{i=0}^{n-1}(-1)^iS_{[n-i,1^i]}= M_{[n]}$, as desired.
\end{proof}

\begin{rem*}Lemma \ref{lemma:alpha_n} can be proved more directly by
  using the Murnaghan-Nakayama rule \cite[Theorem 4.10.2]{S00} to
  express $\alpha_n$ as a linear combination of characters: for any
  $n\!\vdash\!\lambda$, the
  scalar product $\langle\chi_\lambda,\alpha_n\rangle$ is,
  up to a constant, the value of $\chi_\lambda$ at one specific permutation, namely a
  cycle of length $n$. The Murnaghan-Nakayama rule, when applied to
  such a cycle, takes a simple form. 
\end{rem*}  

We immediately conclude:

\begin{corollary} We have 
$$ \alpha_n = \frac1n \sum_{i=0}^{n-1} (-1)^i \chi_{[n-i, 1^i]} $$
which is Theorem \ref{thm:k} for $k=n$.
\end{corollary}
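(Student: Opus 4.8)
The plan is to read the corollary off directly from Lemma \ref{lemma:alpha_n} by inverting the Frobenius characteristic map, so there is essentially no work to do beyond unwinding definitions. I would first recall that $\ch$ sends each irreducible character $\chi_\mu$ to the Schur polynomial $S_\mu$ and is extended linearly; since the $S_\mu$ (over $n\vdash\mu$) form a basis of the degree-$n$ homogeneous symmetric polynomials, $\ch$ is a linear isomorphism from the space of class functions on $S_n$ onto that space of polynomials. Consequently a decomposition $\ch(f)=\sum_\mu c_\mu S_\mu$ is equivalent to the decomposition $f=\sum_\mu c_\mu\chi_\mu$ into irreducible characters.

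Applying this to $f=\alpha_n$, Lemma \ref{lemma:alpha_n} gives $\ch(\alpha_n)=\frac1n\sum_{i=0}^{n-1}(-1)^i S_{[n-i,1^i]}$. Writing $S_{[n-i,1^i]}=\ch(\chi_{[n-i,1^i]})$ and using linearity, the right-hand side equals $\ch\bigl(\frac1n\sum_{i=0}^{n-1}(-1)^i\chi_{[n-i,1^i]}\bigr)$, and injectivity of $\ch$ then forces the desired identity $\alpha_n=\frac1n\sum_{i=0}^{n-1}(-1)^i\chi_{[n-i,1^i]}$.

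The only remaining point is the bookkeeping check that this agrees with Theorem \ref{thm:k} at $k=n$. Specializing (\ref{eq:defak}) to $k=n$, the first case gives $a_{[n]}=1$ (the hook $[n-i,1^i]$ at $i=0$); the second case, with $n-k+1=1$, reduces to $\rho=[n-i-1,1^{i+1}]$ with sign $(-1)^{i+1}$ for $i\in\{0,\dots,n-2\}$, i.e.\ the hooks $[n-j,1^j]$ with sign $(-1)^j$ for $j\in\{1,\dots,n-1\}$; and the third case is empty since $n-k=0$. Together these are exactly the coefficients $\frac1n(-1)^i$ on $\chi_{[n-i,1^i]}$ for $i\in\{0,\dots,n-1\}$. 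There is no genuine obstacle here: every step is forced, the entire substance of the statement already resides in Lemma \ref{lemma:alpha_n}, and the one thing worth verifying carefully is precisely this claim that the hooks $[n-i,1^i]$ exhaust the nonzero entries of (\ref{eq:defak}) when $k=n$.
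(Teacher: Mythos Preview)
Your proposal is correct and matches the paper's approach exactly: the corollary is stated as an immediate consequence of Lemma~\ref{lemma:alpha_n} via the injectivity of $\ch$, and you have simply spelled out that one-line inversion together with the (optional but reassuring) bookkeeping that the resulting hook coefficients agree with the specialization of (\ref{eq:defak}) at $k=n$.
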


Let us now treat the general case, using the case we already
proved. By lemma \ref{lemma:alpha_n}, applied to $k$,
$$ \frac 1k \sum_{i=1}^m x_i^k = \frac 1k \sum_{i=0}^{k-1}(-1)^i S_{[k-i,1^i]} $$
Hence, by lemma \ref{lemma:ch_formula},
$$ \ch(\alpha_k) = \frac 1k \left( \sum_{i=0}^{k-1}(-1)^i S_{[k-i,1^i]} \right ) H_{n-k}. $$

We now apply Pieri's formula (see \cite{FH91}), according to which,
$S_{[k-i,1^i]} H_{n-k}$ is the sum of all polynomials of the form
$S_{\lambda'}$, where $\lambda'$ is obtained by adding $n-k$ boxes to
$[k-i,1^i]$, without adding two boxes in the same column. Since we
have a hook-shaped diagram, our possibilities are rather limited:
we may add a box at the leftmost column or not, and the rest of the
boxes go in the first two rows. Denote therefore
$$ S_{[k-i,1^i]}H_{n-k}=A_i+B_i$$
where $A_i$ is the sum when one does not add a square at the leftmost
column, and $B_i$ is when one does. Denote also
$x(i,j)=S_{[n-i-j,1+j,1^{i-1}]}$ (the contribution coming from adding $j$ boxes to the second row of $[k-i,1^i]$, and the remaining $n-k-j$ boxes to the first row). Then
\begin{gather*}
 A_0=S_{[n]}\qquad A_i = \sum_{j=0}^{\min(n-k,k-i-1)}x(i,j)\\
 B_i = \sum_{j=0}^{\min(n-k-1,k-i-1)}x(i+1,j).
\end{gather*}
We now sum over $i$ and get,
$$ \left(\sum_i x_i^k \right ) H_{n-k} = \sum_{i=0}^{k-1} (-1)^i (A_i+B_i)$$

Our next goal is to find the alternating sum $\sum_{i=0}^{k-1} (-1)^i
(A_i+B_i)$. There are further cancellations here because $B_i$ and
$A_{i+1}$ are quite similar --- $B_i$ corresponds to adding a box to
the first column of $[k-i,1^i]$ while $A_{i+1}$ corresponds to not
adding a box to the first column of $[k-i-1,1^{i+1}]$. Hence most of
the terms cancel out. We get
\begin{align*}
A_{i+1}&= \left \{ \begin{array}{ll}
\sum_{j=0}^{n-k}x(i+1,j) &   0 \leq i \leq 2k-n-2 \\
\sum_{j=0}^{k-i-2}x(i+1,j) &  2k-n-1 \leq i \leq k-2 \\ 
\end{array}
\right.\\
B_i&= \left \{ \begin{array}{ll}
\sum_{j=0}^{n-k-1}x(i+1,j) &   0 \leq i \leq 2k-n-1 \\
\sum_{j=0}^{k-i-1}x(i+1,j) & 2k-n \leq i \leq k-1 \\ 
\end{array}
\right.
\end{align*}
Hence (putting $A_k=0$),
$$ B_i-A_{i+1}= \left \{ \begin{array}{ll}
\sum_{j=0}^{n-k-1}x(i+1,j)-\sum_{j=0}^{n-k}x(i+1,j)=-x(i+1,n-k) & 0 \leq i \leq 2k-n-2 \\
\sum_{j=0}^{n-k-1}x(i+1,j)-\sum_{j=0}^{n-k-1}x(i+1,j)=0&  i=2k-n-1 \\  
\sum_{j=0}^{k-i-1}x(i+1,j)-\sum_{j=0}^{k-i-2}x(i+1,j)=x(i+1,k-i-1)& 2k-n \le i \leq k-1 \\ 
\end{array}
\right.
$$
and
\begin{align*}
 \sum_{i=0}^{k-1} (-1)^i (A_i+B_i)&=A_0+\sum_{i=0}^{k-1}(-1)^i(B_i-A_{i+1})=\\
&=S_{[n]}-\sum_{i=0}^{2k-n-2}(-1)^ix(i+1,n-k)+\sum_{i=2k-n}^{k-1}(-1)^ix(i+1,k-i-1)=\\
& = S_{[n]}-\sum_{i=0}^{2k-n-2}(-1)^iS_{[k-i-1,n-k+1,1^i]}+\sum_{i=2k-n}^{k-1}(-1)^i S_{[n-k,k-i,1^i]} = \sum_{\rho} a_{\rho} S_{\rho}
\end{align*}
where the numbers $a_{\rho}$ were defined in the statement of Theorem \ref{thm:k}.
Hence, by lemma \ref{lemma:ch_formula},
$$ k \cdot \ch(\alpha_k) = \left(\sum_i x_i^k \right ) H_{n-k} = \sum_{\rho} a_{\rho} S_{\rho}.$$
This ends the proof of Theorem \ref{thm:k}.\qed

\section{The probability of long cycles}\label{sec:cor}
Let $\rho$ be a partition of $n$, and
let $U_\rho:S_n\to\GL(\C^{\dim U_\rho})$ be the corresponding irreducible
representation. Let $D=\sum d_gg$ be any element of the group
ring. Then $U_\rho(D)$ is the element of $\GL(\C^{\dim U\rho})$ given by
\[
\sum_g d_gU_\rho(g).
\]
(it might be useful to think about $U_\rho(D)$ as a non-commutative Fourier transform
of $D$, with the fact that $U_\rho(D_1D_2)=U_\rho(D_1)U_\rho(D_2)$
being the non-commutative analog of
$\widehat{f*g}=\widehat{f}\widehat{\vphantom{f}g}$). 
In the case that $D=\Delta_A$ we will denote the eigenvalues of this matrix by
$0\leq\lambda_1(A,\rho)\leq \dotsc \leq \lambda_{\dim(\rho)}(A,\rho)$
(it is well-known that $U_\rho(\Delta_A)$ is positive semidefinite and
in particular diagonalizable, see e.g.\ \cite{AK09}).

\begin{lemma} \label{lemma:formula}
For any $n$ and $k$ we have 
\[
\EE(s_k(t))=\frac
1k\sum_{n\vdash\rho}a_\rho\sum_{j=1}^{\dim U_\rho}e^{-t\lambda_j(A,\rho)}
\]
where $a_\rho$ are as in Theorem \ref{thm:k}.
\end{lemma}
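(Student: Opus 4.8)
The plan is to start from the expression already derived in Section~\ref{sec:notations}, namely
\[
\EE(s_k(t)) = n!\,\langle e^{-t\Delta}, \alpha_k\rangle,
\]
and substitute the character decomposition of $\alpha_k$ furnished by Theorem~\ref{thm:k}. Since $\alpha_k = \frac1k\sum_{n\vdash\rho} a_\rho\,\chi_\rho$ with real coefficients $a_\rho$, linearity of the inner product reduces the problem to computing $n!\,\langle e^{-t\Delta}, \chi_\rho\rangle$ for each irreducible $\rho$. The whole statement will then follow once I identify this single inner product with $\sum_{j=1}^{\dim U_\rho} e^{-t\lambda_j(A,\rho)}$.

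First I would pass to the non-commutative Fourier side using the representation $U_\rho$ introduced just before the lemma. The key identity is that for a class function of the form $\chi_\rho$, the inner product against any element $D$ of the group ring picks out a normalized trace: concretely, because $U_\rho$ is a homomorphism of the group ring, $U_\rho(e^{-t\Delta}) = e^{-t U_\rho(\Delta)}$ (the exponential of the matrix $U_\rho(\Delta_A)$), and by Schur orthogonality the pairing $n!\,\langle D, \chi_\rho\rangle$ equals $\tr\big(U_\rho(D)\big)$ for any central-enough $D$; more carefully, I would write $e^{-t\Delta}$ in the basis $\{g\}$, apply $U_\rho(g)$, and use the orthogonality relation $\langle \chi_\rho, \chi_\sigma\rangle = \delta_{\rho\sigma}$ together with the fact that $\chi_\rho(g) = \tr U_\rho(g)$ to collapse the sum over $g$ into $\tr\big(e^{-t U_\rho(\Delta)}\big)$. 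This is the step where I must be careful about normalization constants and about whether the relevant inner product is with $\chi_\rho$ or $\overline{\chi_\rho}$, but since the characters of $S_n$ are real-valued this subtlety disappears.

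Second, I would diagonalize. The matrix $U_\rho(\Delta_A)$ is positive semidefinite (hence diagonalizable) by the fact cited from \cite{AK09}, with eigenvalues $\lambda_1(A,\rho)\le\dots\le\lambda_{\dim U_\rho}(A,\rho)$. The trace of the matrix exponential of a diagonalizable operator is the sum of the exponentials of its eigenvalues, giving
\[
\tr\big(e^{-t U_\rho(\Delta)}\big) = \sum_{j=1}^{\dim U_\rho} e^{-t\lambda_j(A,\rho)}.
\]
Assembling the pieces,
\[
\EE(s_k(t)) = \frac1k\sum_{n\vdash\rho} a_\rho\, n!\,\langle e^{-t\Delta},\chi_\rho\rangle
= \frac1k\sum_{n\vdash\rho} a_\rho \sum_{j=1}^{\dim U_\rho} e^{-t\lambda_j(A,\rho)},
\]
which is exactly the claimed formula.

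The only genuine obstacle is the trace identity in the first step: verifying cleanly that $n!\,\langle e^{-t\Delta},\chi_\rho\rangle = \tr\big(U_\rho(e^{-t\Delta})\big)$. The cleanest route is to prove this for a single group element $g$, i.e.\ $n!\,\langle g, \chi_\rho\rangle = \chi_\rho(g^{-1}) = \tr U_\rho(g^{-1})$, where the first equality is just the definition of the inner product and the second uses reality of the character; then extend to $D=e^{-t\Delta}$ by linearity and continuity of the series, matching $U_\rho(g^{-1})$ against $U_\rho(D)$ via the anti-homomorphism property (again harmless since $\Delta$ and its exponential are built from self-adjoint real combinations). Everything after this identity is routine linear algebra.
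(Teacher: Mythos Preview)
Your proposal is correct and follows essentially the same route as the paper: expand $\alpha_k$ via Theorem~\ref{thm:k}, identify $n!\langle e^{-t\Delta},\chi_\rho\rangle$ with $\tr\big(U_\rho(e^{-t\Delta})\big)$, use $U_\rho(e^{-t\Delta})=e^{-tU_\rho(\Delta)}$, and diagonalize. One remark: you over-worry the trace identity. By the paper's definition $U_\rho(D)=\sum_g D(g)U_\rho(g)$ and linearity of trace, one has directly $n!\langle D,\chi_\rho\rangle=\sum_g D(g)\chi_\rho(g)=\tr U_\rho(D)$; no Schur orthogonality, no $g^{-1}$, and no anti-homomorphism considerations are needed.
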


\begin{proof}
As discussed in \S\ref{sec:notations}, 
$$\EE(s_k(t))=n!\langle \alpha_k, e^{-t\Delta_A} \rangle = \frac1k \sum_\rho a_\rho n!\langle e^{-t\Delta_A}, \chi_\rho\rangle $$
By definition, $\chi_\rho$ attaches to each $g\in S_n$ the trace of
$g$ acting on the representation $U_\rho$. By the linearity of the trace,
\[
n!\langle e^{-t\Delta_A},\chi_\rho\rangle =
\tr\left(U_\rho\left(e^{-t\Delta_A}\right)\right)
\]
where $U_\rho(\cdot)$ is the action of a representation on an
element of the group ring as above. Further, for every representation
$U$ and any element $D$ of the group ring,
\[
U(e^D)=e^{U(D)}
\]
where the exponentiation on the left-hand side is in the group ring
while on the right-hand side we have exponentiation of matrices. Since $U_\rho(-t \Delta)$ is diagonalizable,

\[
\tr\left(U_\rho\left(e^{-t\Delta}\right)\right)=\sum_j
e^{-t\lambda_j(A,\rho)}.
\]
The proof now follows from Theorem \ref{thm:k}.
\qedhere

\end{proof}
\begin{proof}[Proof of Theorem \ref{thm:n}]
$s_n(t)$ can take only the values 0 and 1. Hence, using lemma \ref{lemma:formula} for $k=n$, we get
$$ \PP(s_n(t)=1)=\EE(s_n(t))=\frac1n\sum_{i=0}^{n-1} (-1)^i \sum_j e^{-t\lambda_j(A, [n-i,1^i])} $$
Since $[n-i,1^i]$ is a hook-shaped diagram, the eigenvalues
$\lambda_j(A,[n-i,1^i])$ are simply all the sums of $i$-tuples of the
eigenvalues $\lambda_1(A),\dotsc,\lambda_{n-1}(A)$. (See \cite{B94}
and also the appendix of \cite{AK09}). Hence,
\[
\PP(s_n(t)=1)=\frac1n\left(1+\sum_{i=1}^{n-1}(-1)^i\sum_{1\leq j_1 <
  j_2 < \dotsc <j_i \leq
  n-1}e^{-t(\lambda_{j_1}+\dotsc+\lambda_{j_i})}\right) =
\frac1n\prod_{i=1}^{n-1}(1-e^{-\lambda_i t}).\qedhere
\]
\end{proof}


\begin{proof}[Proof of Theorem \ref{thm:chuk}]
The partitions that appear in lemma \ref{lemma:formula} are of the
form $[a,b,1^c]$, where $a+b+c=n$, $a\geq b> 0$, and $c \geq 0$. For
such a partition a simple calculation with the hook formula \cite[\S 4.12]{FH91} gives
$$ \dim U_{\lambda} = \frac {b(a-b+1)}{(b+c)(a+c+1)} \frac {n!}{a!b!c!} \leq \binom{n}{a,b,c} $$
Hence, the total number of summands in lemma \ref{lemma:formula} is bounded by $\sum_{a+b+c=n} \binom{n}{a,b,c}=3^n$.
Also, by the celebrated Caputo-Liggett-Richthammer theorem \cite{CLR09}, we have for all $j$,
$$\lambda_j(A, [a,b,1^c]) \geq \lambda_1(A)$$
The result now follows from lemma \ref{lemma:formula}.
\end{proof}

\begin{rem*}
For a non-hook-shaped partition $\rho$, the eigenvalues
$\lambda(A,\rho)$ are, in general, not a function of the eigenvalues
of the graph $A$. Such examples exist for $n$ as low as 4. In other
words, one can find two \emph{isospectral} (weighted) graphs $A_1$,
$A_2$ with 4 vertices for which $\lambda(A_i,[2,2])$ differ. By lemma
\ref{lemma:formula}, these two isospectral graphs also have different
values for $\PP(s_{3}(t)=1)$ for general $t$. Such examples can be
found by constructing $A_2$ as a conjugation of $A_1$ (for a generic $A_1$)
by an orthogonal perturbation of the identity which preserves the vector $(1,\dotsc,1)$.
\end{rem*}

\section{T\'oth's conjecture}\label{sec:Toth}
Let us start by describing T\'oth's work on the quantum Heisenberg ferromagnet \cite{T93}. Building on earlier work by Conlon and Solovej, he found what physicists term a \emph{graphical representation} of the model, i.e.\ a rigorous translation to an (interacting) random walk question. Most relevant for us is T\'oth's formula for the \emph{spontaneous magnetization} $m(\beta)$ of the quantum Heisenberg ferromagnet at inverse temperature $\beta$. Let $c_\beta(0)$ be the size of the cycle of $0$ at time $\beta$ for the interchange process on $[-r,r]^3$. Then \cite[(5.2)]{T93}
\[
m(\beta)=\frac 12 \lim_{n\to\infty}\lim_{r\to\infty}
\frac{\mathbb E\Big(\mathbf 1\{c_\beta(0)>n\}2^{\sum_{k\ge 1} s_k(\beta)}\Big)}
{\mathbb E\big(2^{\sum_{k\ge 1} s_K(\beta)}\big)}
\]
(recall that $s_k(\beta)$ is the number of cycles of length $k$ at time $\beta$, so their sum is just the total number of cycles, again for the interchange process on $[-r,r]^3$). Notice the somewhat counterintuitive fact that the inverse temperature becomes the time in this representation. With this formula (which some readers might feel more convenient to simply take as the definition of $m(\beta)$), T\'oth's conjecture is
\begin{conjecture}\label{conj:QHF}
$m(\beta)$ admits a phase transition, i.e.\ there exists some $\beta_c$ such that $m(\beta)=0$ for $\beta<\beta_c$ and $m(\beta)>0$ for $\beta>\beta_c$.
\end{conjecture}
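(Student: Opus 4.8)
The plan is to feed T\'oth's formula into the spectral machinery of Lemma \ref{lemma:formula}. Write $N=(2r+1)^3$ for the number of vertices, so the process lives on $S_N$, and for $g\in S_N$ let $\mathrm{cyc}(g)$ denote its number of cycles, so that $2^{\mathrm{cyc}}=2^{\sum_{k\ge 1}s_k}$. The key observation is that $2^{\mathrm{cyc}(g)}=\tr_{(\C^2)^{\otimes N}}(g)$, the character of the permutation action on $(\C^2)^{\otimes N}$. By Schur--Weyl duality this character expands as a sum over partitions $N\vdash\rho$ with at most two rows, namely $\sum_{\rho}(\rho_1-\rho_2+1)\chi_\rho$, the coefficient $\rho_1-\rho_2+1$ being the dimension of the $\mathrm{GL}_2$-module attached to $\rho$. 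Hence the denominator is
\[
Z(\beta):=\EE\big(2^{\mathrm{cyc}}\big)=\sum_{\rho:\,\ell(\rho)\le 2}(\rho_1-\rho_2+1)\sum_{j}e^{-\beta\lambda_j(A,\rho)},
\]
exactly of the shape produced by Lemma \ref{lemma:formula}. Better still, $\Delta_A$ acting on $(\C^2)^{\otimes N}$ is the ferromagnetic Heisenberg Hamiltonian, so the $\lambda_j(A,\rho)$ are its energy levels in the total-spin-$S$ sector with $S=(\rho_1-\rho_2)/2$; this is T\'oth's graphical representation read backwards, and it matches a long first row $\rho_1$ with large total spin.

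For the numerator I would first strip the vertex-dependence of $\mathbf 1\{c_\beta(0)>n\}$. Since the Gibbs weight $2^{\mathrm{cyc}}$ is a class function, $\EE(\mathbf 1\{c_\beta(0)>n\}\,2^{\mathrm{cyc}})$ should differ from its spatial average $\tfrac1N\sum_v\EE(\mathbf 1\{c_\beta(v)>n\}\,2^{\mathrm{cyc}})$ only by boundary contributions that vanish after the $r\to\infty$ limit, and $\sum_v\mathbf 1\{c_\beta(v)>n\}=\sum_{k>n}k\,s_k(\beta)$, since each cycle of length $k$ has exactly $k$ vertices. Thus the numerator becomes $\tfrac1N\sum_{k>n}k\,\EE(\alpha_k\cdot 2^{\mathrm{cyc}})$, a sum of genuine class functions. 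Each product $\alpha_k\cdot 2^{\mathrm{cyc}}$ decomposes into irreducible characters through Theorem \ref{thm:k} together with the $S_N$ Kronecker product rule, and so its expectation under $e^{-\beta\Delta_A}$ is again a spectral sum of the form $\sum_\rho(\cdots)\sum_j e^{-\beta\lambda_j(A,\rho)}$.

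With both pieces spectral, the conjecture turns into a statement about where the Gibbs mass concentrates. Writing $Z(\beta)=\sum_\rho w_\rho(\beta)$ with $w_\rho(\beta)=(\rho_1-\rho_2+1)\sum_j e^{-\beta\lambda_j(A,\rho)}$, I expect $m(\beta)>0$ to be equivalent, in the double limit, to a positive fraction of $Z(\beta)$ being carried by partitions with a \emph{macroscopic} first row $\rho_1\ge cN$ (equivalently macroscopic total spin), while $m(\beta)=0$ corresponds to the mass concentrating on $\rho$ with $\rho_1=o(N)$. The goal is then to show that this concentration undergoes a sharp transition in $\beta$: for small $\beta$ the sheer entropy of short-cycle, nearly balanced two-row partitions dominates, whereas for large $\beta$ the small ground-state energies of the high-spin sectors (the one-row sector $[N]$ has ground energy exactly $0$, giving the $a_\rho=1$ term of Theorem \ref{thm:k}) let them win.

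The main obstacle is exactly the control of $\lambda_1(A,\rho)$ — the ground-state energy of the ferromagnet in each total-spin sector — for the genuinely two-row partitions on $[-r,r]^3$. For hook-shaped $\rho$ Bacher's formula \cite{B94} gives the eigenvalues explicitly, but as the isospectrality remark following Lemma \ref{lemma:formula} shows, for two-row $\rho$ these eigenvalues are \emph{not} functions of the graph spectrum and no closed form is available; estimating them is precisely the hard quantum many-body problem at the heart of the conjecture. Moreover the transition must invoke the dimension $d\ge 3$ in an essential way (there is no long-range order for $d\le 2$), so the present character formalism has to be supplemented by a genuinely three-dimensional input — a reflection-positivity / infrared bound on $U_\rho(\Delta_A)$ controlling the density of low-lying states across the high-spin sectors, uniformly in $r$. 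Supplying such a bound is the step I expect to be decisive, and it is the reason T\'oth's conjecture remains open.
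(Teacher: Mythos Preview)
There is nothing to compare against: the statement is labelled \textbf{Conjecture} in the paper precisely because the paper does not prove it. Section~\ref{sec:Toth} states explicitly that ``what is wide open, for both conjectures, is that for $\beta$ sufficiently large, the limits are non-zero,'' and the authors describe Theorem~\ref{thm:k} only as ``a stepping stone for a representation-theoretic attack,'' not as a proof.

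Your write-up is not a proof either, and to your credit you say so in the final paragraph. What you have produced is a program: rewrite both numerator and denominator of T\'oth's formula as spectral sums over two-row representations via Schur--Weyl duality, and then try to show that the Gibbs mass shifts from balanced to macroscopically unbalanced two-row $\rho$ as $\beta$ increases. The Schur--Weyl identity $2^{\mathrm{cyc}(g)}=\sum_{\ell(\rho)\le 2}(\rho_1-\rho_2+1)\chi_\rho(g)$ and the resulting formula for $Z(\beta)$ are correct and are indeed the representation-theoretic content behind T\'oth's graphical representation. The averaging step for the numerator is more delicate than you indicate (on $[-r,r]^3$ with free boundary the law of $c_\beta(0)$ genuinely depends on the position of $0$, so ``boundary contributions vanish in the limit'' requires an argument or a switch to periodic boundary conditions), but this is a secondary issue.

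The decisive gap is exactly the one you name: controlling the low-lying eigenvalues $\lambda_j(A,\rho)$ for genuinely two-row $\rho$ on the three-dimensional box, uniformly in $r$, in a way that uses $d\ge 3$. No such bound is available, and supplying one would amount to proving long-range order for the quantum Heisenberg ferromagnet --- the very problem Conjecture~\ref{conj:QHF} encodes. So your proposal should be read as a reformulation and a plan of attack, in the same spirit as the paper's own discussion, not as a proof.
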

It is natural to try first to remove the weights and investigate only $\mathbb P(c_\beta(0)>n)$ (T\'oth himself hints that this might be an interesting toy model). One then gets the following:
\begin{conjecture}\label{conj:interchange}
The function
\[
\lim_{n\to\infty}\lim_{r\to\infty}\mathbb P(c_\beta(0)>n)
\]
Undergoes a phase transition in $\beta$: it is zero for $\beta<\beta_c$ (not necessarily the same $\beta_c$ as in the previous conjecture) and positive for $\beta>\beta_c$.
\end{conjecture}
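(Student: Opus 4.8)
The plan is to treat the two halves of the phase transition by entirely different methods, organised around the \emph{two-point function}
\[
G_\beta(x)=\PP(0 \text{ and } x \text{ lie in the same cycle at time } \beta).
\]
Writing $N=\#[-r,r]^3$, the event $\{c_\beta(0)>n\}$ is, up to boundary effects, the mean (over translates) of the fraction of vertices in cycles of length $>n$, so $\PP(c_\beta(0)>n)$ is comparable to $\frac1N\sum_{k>n}k\,\EE(s_k(\beta))$, while $\sum_x G_\beta(x)=\EE(c_\beta(0))$. Writing $\theta(\beta)$ for the double limit in the statement, I would set $\beta_c=\sup\{\beta:\theta(\beta)=0\}$ and aim to show $\theta$ vanishes below $\beta_c$ and is positive above it. One subtlety to flag immediately is that monotonicity of $\theta$ in $\beta$ is not obvious — inserting a transposition can just as well split a long cycle as merge two — so part of the work is to show that $\{\beta:\theta(\beta)=0\}$ is genuinely an interval, ideally via a coupling that is monotone in $\beta$; absent that, one falls back on the weaker conclusion ``zero for small $\beta$, positive for large $\beta$''.

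The subcritical half ($\beta$ small) I expect to be routine, by comparison with Bernoulli percolation. Each edge carries an independent rate-$1$ clock, so by time $\beta$ it has rung at least once with probability $p=1-e^{-\beta}$, independently across edges. Since no transposition can move a marble across an edge whose clock never rang, the cycle through $0$ is contained in the connected cluster of $0$ in the bond configuration in which an edge is open iff its clock has rung. Hence $c_\beta(0)$ is stochastically dominated by the cluster size of $0$ in $\mathrm{Bernoulli}(p)$ percolation on $\Z^3$, and whenever $1-e^{-\beta}<p_c(\Z^3)$ that cluster is a.s.\ finite, giving $\lim_{n}\lim_{r}\PP(c_\beta(0)>n)=0$. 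This already yields the nontrivial bound $\beta_c\ge-\log(1-p_c(\Z^3))>0$.

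The supercritical half is the main obstacle, and is precisely where the conjecture is genuinely open. The natural target is to show that for large $\beta$ the two-point function does not decay, $\liminf_{|x|\to\infty}G_\beta(x)>0$; by the standard dictionary between non-decaying correlations and the existence of an infinite object — as in percolation, through uniqueness of the infinite cluster together with a mixing argument identifying $\lim_{|x|\to\infty}G_\beta(x)$ with $\theta(\beta)^2$ — this is essentially equivalent to $\theta(\beta)>0$. The tool one would reach for is reflection positivity together with an infrared (Gaussian-domination) bound, which is how non-decay of correlations is established for related random-loop ensembles in dimension $d\ge3$. The difficulty, and the reason the conjecture remains open, is that the loop representation underlying the \emph{pure} interchange process (loop weight $1$) does not appear to be reflection positive in the form the infrared bound demands; the reflection-positive cases in the literature concern the antiferromagnetic / double-bond ensembles rather than this one. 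I would therefore either hunt for an alternative reflection, or a domination by a reflection-positive model for the event of a long cycle, or instead pursue the representation-theoretic route opened by Lemma~\ref{lemma:formula}, bounding $\frac1N\sum_{k>n}k\,\EE(s_k(\beta))$ from below uniformly in $r$. The latter requires controlling the spectrum of $U_\rho(\Delta_A)$ for the box graph $A$ and the two-row-plus-hook partitions $\rho$ appearing in Theorem~\ref{thm:k}, uniformly over the enormous family of such $\rho$ that arise as $r\to\infty$; the only general estimate available, Theorem~\ref{thm:chuk}, is hopelessly lossy here (its $3^n$ prefactor is useless at fixed $\beta$). I expect this uniform spectral control — or, equivalently, the repair of reflection positivity for the weight-$1$ loop model — to be where essentially all of the difficulty lies.
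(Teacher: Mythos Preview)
This statement is a \emph{conjecture}, and the paper does not prove it. Immediately after stating Conjectures~\ref{conj:QHF} and~\ref{conj:interchange} the paper says explicitly: ``For both conjectures, it is not difficult to show that for $\beta$ sufficiently small the corresponding limits are zero. What is wide open, for both conjectures, is that for $\beta$ sufficiently large, the limits are non-zero.'' There is therefore nothing to compare your attempt against: the paper offers no proof, only the remark that the small-$\beta$ side is easy and the large-$\beta$ side is open, together with a discussion of the mean-field case and of the $k=n$ computation on the box.

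Your write-up is not a proof but a research outline, and as such it is well-aligned with the paper's own assessment. Your percolation-domination argument for small $\beta$ is exactly the kind of thing the paper has in mind when it calls that direction ``not difficult''. Your candid admission that the supercritical half is where the conjecture is genuinely open, and that neither reflection positivity nor the spectral route via Lemma~\ref{lemma:formula} and Theorem~\ref{thm:chuk} currently closes the gap, is accurate and matches the paper's position (it says ``our estimates for the eigenvalues $\lambda_j([-r,r]^3,\rho)$ are too weak to give good information on T\'oth's conjecture''). You are also right to flag the monotonicity issue: the conjecture as stated asserts an interval structure for the zero set, and since applying a transposition can fragment a cycle as easily as coagulate two, there is no obvious monotone coupling in $\beta$; this point would remain even after establishing existence of macroscopic cycles at some large $\beta$. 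In short, there is no genuine error in your proposal, but you should be clear that what you have written is a strategy sketch for an open problem, not a proof.
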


For both conjectures, it is not difficult to show that for $\beta$
sufficiently small the corresponding limits are zero. What is wide
open, for both conjectures, is that for $\beta$ sufficiently large,
the limits are non-zero. In other words, the big open problem at this
point is not sharpness or uniqueness of the phase transition, but the
actual existence of the high $\beta$ phase (the so-called ordered
phase).

Conjecture \ref{conj:interchange} was investigated when $[-r,r]^3$ is
replaced by the complete graph, the so-called \emph{mean-field}
case. The mean-field case was solved first by Berestycki \& Durrett
\cite{BD06} (who arrived at this problem from a different angle) and
then by Schramm \cite{S05}, who gave much more information on the
structure of the large cycles. In the mean-field case, $\beta_c$ is
explicitly known. An analog of conjecture \ref{conj:interchange} for
infinite graphs was investigated for trees
\cite{A03,H12a,H12b}. Notably, for trees of sufficiently high degree,
\cite{H12b} shows that there is a phase transition without calculating
the value of $\beta_c$.

We consider our Theorem \ref{thm:k} as a stepping stone for a
representation-theoretic attack on both conjectures. For conjecture
\ref{conj:interchange}, it reduces the problem to a calculation or
estimate of the eigenvalues of only some representations. In the
mean-field case, these eigenvalues are explicitly known \cite{DS81}
which leads to a simple analysis of this problem, see \cite{BK}. For conjecture \ref{conj:QHF}, this requires an extra ingredient even in the mean-field case: the interaction between the function $c_\beta(0)$ and the function $2^{\sum s_k(\beta)}$. We hope to tackle this problem in the future.

To gain some more insight on the non-mean-field case in conjecture \ref{conj:interchange}, let us examine the case $k=n$, i.e.\ apply Theorem \ref{thm:n} to the graph $[-r,r]^3$. For this graph the eigenfunctions and eigenvalues are explicitly known. Every vector $\xi\in\{0,\dotsc,2r-1\}^3$ the function $f(v)=\exp(2\pi i\langle\xi,v\rangle/(2r-1))$ is an eigenvector with the eigenvalue being $\sum(1-\cos(2\pi\xi_j/(2r-1)))$. Plugging these values into Theorem \ref{thm:n} with a little calculation shows, for example,
\[
\min\Big\{t:\PP(s_n(t)=1)\ge \frac 1{2n}\Big\}\approx r^2\approx n^{2/3}.
\]
In other words, the probability starts approaching the limit value $\frac 1n$ only when t is of the order of $n^{2/3}$ (for general dimensions, i.e.\ the graphs $[-r,r]^d$, the value would be $n^{2/d}$).

Thus we see that, unlike what one would expect from a naive extrapolation of conjecture \ref{conj:interchange}, the probability of a cycle of length $n$ does \emph{not} equilibrate at constant time but after much longer time.
The culprit
for this slow equilibration lies in the representation $[n-1,1]$
appearing in the sum when $k=n$. It is therefore reassuring to notice
that this representation appears \emph{only} when $k=n$. Again, at
this point our estimates for the eigenvalues
$\lambda_j([-r,r]^3,\rho)$ are too weak to give good information on
T\'oth's conjecture. See \cite{T10} for more information on these eigenvalues.


\subsection*{Acknowledgements}
We wish to thank Nati Linial for asking what happens when $t\to
0$; Richard Stanley for referring us to \cite{EH04}; and Yuval Roichman
and Ron Adin for interesting discussions. GK's research partially supported by the Israel Science
Foundation.

\appendix
\section*{Appendix. Kirchoff's matrix-tree theorem}

Here we give a new proof of the following old theorem, essentially due to
Kirchoff.
\begin{theorem*}Let $G$ be any weighted graph, and denote by $w_e$ the
  weight of the edge $e$. For a spanning tree $T$, denote
  $w(T)=\prod_{e\in T}w_e$ where the product is over all edges $e$ of
  $T$. Finally denote by $0=\lambda_0\le\lambda_1\le\dotsb\le\lambda_{n-1}$ the
  eigenvalues of the continuous time Laplacian $\Delta_G$. Then
\[
\sum_T w(T)=\frac{1}{n}\prod_{i=1}^{n-1}\lambda_i
\]
\end{theorem*}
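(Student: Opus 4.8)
The plan is to extract both sides of the asymptotic expansion of the identity in Theorem \ref{thm:n} as $t\to 0$ and match the leading coefficients. On the right-hand side this is immediate: since $1-e^{-\lambda_i t}=\lambda_i t+O(t^2)$,
\[
\frac1n\prod_{i=1}^{n-1}(1-e^{-\lambda_i t})=\frac{t^{n-1}}{n}\prod_{i=1}^{n-1}\lambda_i+O(t^n),
\]
so the coefficient of $t^{n-1}$ is exactly $\frac1n\prod_{i=1}^{n-1}\lambda_i$. The work is therefore entirely on the left-hand side: one must show, directly from the dynamics of the interchange process, that $\PP(s_n(t)=1)=t^{n-1}\sum_T w(T)+o(t^{n-1})$.

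First I would observe that turning the identity into an $n$-cycle requires at least $n-1$ transpositions to ring, so the leading-order contribution comes from histories in which exactly $n-1$ clocks ring in $[0,t]$, each on a distinct edge. For a fixed set $S$ of $n-1$ edges, the probability that precisely these edges ring (once each) and no other clock rings is
\[
\prod_{e\in S}\bigl(w_e\,t\,e^{-w_e t}\bigr)\prod_{e\notin S}e^{-w_e t}=t^{n-1}\prod_{e\in S}w_e+O(t^n);
\]
moreover, conditioned on this event the $n-1$ rings occur in a uniformly random order, since each ring time is independent and uniform on $[0,t]$.

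The combinatorial heart is the following elementary lemma: a product of transpositions whose edge set forms a forest has exactly one cycle per connected component, regardless of the order of multiplication. Indeed, adding the edges one at a time, each new edge necessarily joins two vertices lying in distinct components of the forest built so far (otherwise a cycle would appear), and multiplying a permutation by such a transposition merges the two corresponding cycles. Applied to a spanning tree this yields a single $n$-cycle for every one of the $(n-1)!$ orderings; applied to any other set of $n-1$ edges (which is necessarily disconnected) it never yields an $n$-cycle. Hence the conditional probability of obtaining an $n$-cycle given the rings on $S$ is $1$ when $S$ is a spanning tree and $0$ otherwise, and summing over $S$ gives $\PP(s_n(t)=1)=t^{n-1}\sum_T w(T)+o(t^{n-1})$.

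The main obstacle is the bookkeeping of error terms rather than any conceptual difficulty: I must check that histories with $n$ or more rings contribute only $O(t^n)$, and that histories with exactly $n-1$ rings but a repeated edge (hence fewer than $n-1$ distinct edges, necessarily disconnected) cannot produce an $n$-cycle. Both follow from the forest lemma together with the crude bound $\PP(\ge m\text{ rings})=O(t^m)$. Matching the coefficient of $t^{n-1}$ on the two sides then yields $\sum_T w(T)=\frac1n\prod_{i=1}^{n-1}\lambda_i$, as claimed.
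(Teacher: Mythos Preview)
Your proposal is correct and follows essentially the same approach as the paper: take $t\to 0$ in Theorem~\ref{thm:n}, expand the right-hand side directly, and for the left-hand side argue via the merge/split (coagulation--fragmentation) picture that the leading $t^{n-1}$ contribution comes precisely from histories in which the edges that ring form a spanning tree. The paper phrases the combinatorial step in terms of an auxiliary graph $A_t$ rather than your ``forest lemma,'' but the content is identical.
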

(it is quite common to replace the product on the right-hand side by the absolute
value of a cofactor of the Laplacian $\Delta_G$, which gives an equivalent formulation, but
for the approach here this formulation is the more natural one).

\begin{proof}
Apply Theorem \ref{thm:n} with $t\to 0$. On the right-hand side one
gets
\begin{equation}\label{eq:rhs}
\frac{1}{n}\prod_{i=1}^{n-1}(1-e^{-\lambda_it})=\frac{1}{n}\prod_{i=1}^{n-1}(\lambda_it+O(t^2))
=
\frac{1}{n}t^{n-1}\prod_{i=1}^{n-1}\lambda_i + O(t^n).
\end{equation}
To estimate the left-hand side we use the coagulation-fragmentation
view of the interchange process, see e.g.\ \cite{S05}. By this we mean
the observation that when one applies the transposition $(i,j)$, if
$i$ and $j$ belong to different cycles in the permutations, then the
application of $(i,j)$ causes the cycles to merge; while if $i$ and
$j$ belong to the same cycle, then this causes the cycle to split. In
particular, if one draws an auxiliary graph $A_t$ with an edge between
every $i$ and $j$ for which the transposition $(i,j)$ was applied by
time $t$, then the cycles of the permutation at time $t$ are subsets
of the connected components of $A_t$.

Now, since we are interested in the case that $s_n(t)=1$, i.e.\ in the
case that the permutation is one big cycle, then this can happen only
when $A_t$ is connected. But a connected graph with $n$ vertices must
have at least $n-1$ edges, and if it has $n-1$ edges precisely then it
is a spanning tree. Further, if for some spanning tree $T$ the edges
of $T$ are exactly those that have rung by time $t$, and each one rang
exactly once, then the permutation is one big cycle, because a fragmentation
event never happened (these require closed paths in the graph $A_t$)
and $n-1$ coagulations lead to one big cycle. Hence we get for the
left-hand side,
\[
\PP(s_n(t)=1)=\sum_T \PP(\mbox{the edges of $T$ are exactly those that
  rang by time $t$})+O(t^n).
\]
Now, for an edge $e$ the probability that it rang exactly once by time
$t$ is $w_et+O(t^2)$. Further, all these events (for various $e$) are
independent. So we can continue to write
\[
\PP(s_n(t)=1)=\sum_T \prod_{e\in
  T}(w_et+O(t^2))+O(t^n)=t^{n-1}\sum_T\prod_{e\in T}w_e+O(t^n).
\]
Comparing to  (\ref{eq:rhs}) we get Kirchoff's theorem.
\end{proof}

\end{document}